\numberwithin{equation}{section}
\newtheorem{theorem}{Theorem}[section]
\newtheorem{corollary}[theorem]{Corollary}
\newtheorem{lemma}[theorem]{Lemma}
\newtheorem{proposition}[theorem]{Proposition}
\newtheorem{claim}[theorem]{Claim}
\newtheorem{example}[theorem]{\sl Example}
\theoremstyle{definition}
\newtheorem{remark}[theorem]{Remark}
\newcommand{\EE}{{\bf  E}}
\newcommand{\PP}{{\bf  P}}
\newcommand{\Pt}{{\tilde{P}}}
\newcommand{\Gc}{{\cal G}}
\newcommand{\Lc}{{\cal L}}
\newcommand{\xh}{\hat{x}}
\newcommand{\ph}{\hat{p}}
\newcommand{\Ph}{\widehat{P}}
\newcommand{\Gh}{\widehat{G}}
\newcommand{\Xh}{\widehat{X}}
\newcommand{\Th}{\widehat{T}}
\newcommand{\gLh}{\widehat{\gL}}
\newcommand{\begp}{\begin{proposition}}
\newcommand{\enp}{\end{proposition}}
\newcommand{\begt}{\begin{theorem}}
\newcommand{\ent}{\end{theorem}}
\newcommand{\begl}{\begin{lemma}}
\newcommand{\enl}{\end{lemma}}
\newcommand{\begc}{\begin{corollary}}
\newcommand{\enc}{\end{corollary}}
\newcommand{\begcl}{\begin{claim}}
\newcommand{\encl}{\end{claim}}
\newcommand{\begr}{\begin{remark}}
\newcommand{\enr}{\end{remark}}
\newcommand{\begal}{\begin{algorithm}}
\newcommand{\enal}{\end{algorithm}}
\newcommand{\begd}{\begin{definition}}
\newcommand{\enf}{\end{definition}}
\newcommand{\begx}{\begin{example}}
\newcommand{\enx}{\end{example}}
\newcommand{\bega}{\begin{array}}
\newcommand{\ena}{\end{array}}
\newcommand{\ignore}[1]{}
\def\rompar(#1){\textup(#1\textup)}    
\newcommand\gl{\lambda}
\newcommand\gL{\Lambda}
\newcommand\db{\overline{d}}
\newcommand\mb{\overline{m}}
\newcommand\glb{\bar{\gl}}
\newcommand\Ab{\overline{A}}
\newcommand\Pb{\overline{P}}
\newcommand\Tb{\overline{T}}
\newcommand\Xb{\overline{X}}
\newcommand\gLb{\overline{\gL}}
\newcommand{\refS}[1]{Section~\ref{#1}}
\newcommand{\refT}[1]{Theorem~\ref{#1}}
\newcommand{\refL}[1]{Lemma~\ref{#1}}
\newcommand{\refR}[1]{Remark~\ref{#1}}
\newcommand\noqed{\renewcommand{\qed}{}} 
\begin{document}

\setcounter{page}{0}
\thispagestyle{empty}

\begin{center}
{\Large \bf On hitting times and fastest strong stationary times \\ for skip-free and more general chains}
\normalsize

\vspace{4ex}
{\sc James Allen Fill\footnotemark} \\
\vspace{.1in}
Department of Applied Mathematics and Statistics \\
\vspace{.1in}
The Johns Hopkins University \\
\vspace{.1in}
{\tt jimfill@jhu.edu} and {\tt http://www.ams.jhu.edu/\~{}fill/} \\
\end{center}
\vspace{3ex}

\begin{center}
{\sl ABSTRACT} \\
\end{center}

An (upward) skip-free Markov chain with the set of nonnegative integers as state space is a chain for which upward jumps may be only of unit size; there is no restriction on downward jumps.  In a 1987 paper, Brown and Shao determined, for an irreducible continuous-time skip-free chain and any~$d$, the passage time distribution from state~$0$ to state~$d$.  When the nonzero eigenvalues $\nu_j$ of the generator on $\{0, \dots, d\}$, with~$d$ made absorbing, are all real, their result states that the passage time is distributed as the sum of~$d$ independent exponential random variables with rates $\nu_j$.  We give another proof of their theorem.  In the case of birth-and-death chains, our proof leads to an explicit representation of the passage time as a sum of independent exponential random variables.  Diaconis and Miclo recently obtained the first such representation, but our construction is much simpler.

We obtain similar (and new) results for a fastest strong stationary time~$T$ of an ergodic continuous-time skip-free chain with stochastically monotone time-reversal started in state~$0$, and we also obtain discrete-time analogs of all our results.

In the paper's final section we present extensions of our results to more general chains.

\bigskip
\bigskip

\begin{small}

\par\noindent
{\em AMS\/} 2000 {\em subject classifications.\/}  Primary 60J25;
secondary 60J35, 60J10, 60G40.
\medskip
\par\noindent
{\em Key words and phrases.\/}
Markov chains, skip-free chains, birth-and-death chains, passage time, absorption time, strong stationary duality, fastest strong stationary times, eigenvalues, stochastic monotonicity.
\medskip
\par\noindent
\emph{Date.} Revised May~2, 2009. 
\end{small}

\footnotetext[1]{Research supported by NSF grant DMS--0406104,
and by The Johns Hopkins University's Acheson~J.\ Duncan Fund for the
Advancement of Research in Statistics.}

\newpage

\section{Introduction and summary}
\label{S:intro}

An (upward) skip-free Markov chain on the set of nonnegative integers is a chain for which upward jumps may be only of unit size; there is no restriction on downward jumps.  Brown and Shao~\cite{BS} determined, for an irreducible continuous-time skip-free chain and any~$d$, the passage time distribution from state~$0$ to state~$d$ (we have equivalently re-identified the exponential rates):

\begin{theorem}[{\bf \cite{BS}}] \label{T:BS}
Consider an irreducible continuous-time skip-free chain~$Y$ on the nonnegative integers with $Y(0) = 0$.  Given~$d$, let $X$ (with state space $\{0, \dots, d\}$) be obtained from~$Y$ by making~$d$ an absorbing state, and let~$G$ denote the generator for~$X$.  Then the hitting time of state~$d$ (same for~$X$ and~$Y$)  has Laplace transform
$$
u \mapsto \prod_{j = 0}^{d - 1} \frac{\nu_j}{\nu_j + u},
$$
where $\nu_0, \dots, \nu_{d - 1}$ are the~$d$ nonzero eigenvalues of $- G$ (known to have positive real parts).  In particular, if the $\nu_j$'s are real, then the hitting time distribution is the convolution of \emph{Exponential$(\nu_j)$} distributions.  
\end{theorem}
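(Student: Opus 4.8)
The plan is to analyze the hitting time via the Laplace transform of the absorbed chain $X$, exploiting the skip-free structure. Let $G$ be the $(d+1)\times(d+1)$ generator of $X$ with $d$ absorbing, and write $G$ in block form with the absorbing row removed: the relevant object is the $d\times d$ substochastic generator $Q$ on the transient states $\{0,\dots,d-1\}$, so that if $T$ is the hitting time of $d$ started from $0$, then $\EE[e^{-uT}] = (e_0^{\top}(uI - Q)^{-1}(-Q)\mathbf{1})$, or more conveniently $\EE[e^{-uT}] = e_0^{\top}(uI-Q)^{-1} r$ where $r = -Q\mathbf{1} \ge 0$ is the vector of absorption rates into $d$. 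Because the chain is skip-free upward, from state $i < d$ the only way to increase is by $+1$; in particular $r$ is supported on state $d-1$ alone (only from $d-1$ can one jump directly to $d$), say $r = \rho\, e_{d-1}$ with $\rho > 0$. Hence $\EE[e^{-uT}] = \rho\,(uI-Q)^{-1}_{0,\,d-1} = \rho\,\frac{\operatorname{cof}(uI-Q)_{d-1,0}}{\det(uI-Q)}$.

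The denominator is $\det(uI - Q) = \prod_{j=0}^{d-1}(u + \nu_j)$, since the eigenvalues of $-Q$ are exactly the $d$ nonzero eigenvalues $\nu_0,\dots,\nu_{d-1}$ of $-G$ (the remaining eigenvalue of $-G$ is $0$, corresponding to the absorbing state). So it remains to show the numerator $\rho\,\operatorname{cof}(uI-Q)_{d-1,0}$ equals the constant $\prod_{j=0}^{d-1}\nu_j$, independent of $u$. Here the skip-free structure is decisive: the matrix $uI - Q$ is (upper) Hessenberg — its only nonzero entries strictly above the diagonal are on the first superdiagonal, with value $-Q_{i,i+1} < 0$ — because $Q_{i,j} = 0$ for $j > i+1$. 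The relevant cofactor $\operatorname{cof}(uI-Q)_{d-1,0}$ is obtained by deleting row $d-1$ and column $0$, leaving a matrix that is lower triangular (indeed, its $(i,j)$ entry for the surviving indices depends on entries $(uI-Q)_{i,j+1}$, which vanish unless $j+1 \ge i$, i.e.\ $j \ge i-1$... wait — more carefully, deleting row $d-1$ and column $0$ from the Hessenberg matrix $uI-Q$ yields a matrix whose entries below a certain subdiagonal vanish, making its determinant the product of a single diagonal band). Its determinant is therefore $\pm\prod_{i=0}^{d-2}(-Q_{i,i+1})$, a product of superdiagonal entries, which is a nonzero constant in $u$; combined with the sign and with $\rho = Q_{d-1,d}$ this product is $\prod_{j=0}^{d-1}\nu_j = \det(-Q)$ by evaluating at $u=0$ (where $\EE[e^{0}] = 1$ forces numerator $=$ denominator $|_{u=0}$). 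This yields
\[
\EE[e^{-uT}] = \frac{\prod_{j=0}^{d-1}\nu_j}{\prod_{j=0}^{d-1}(u+\nu_j)} = \prod_{j=0}^{d-1}\frac{\nu_j}{\nu_j+u},
\]
and when the $\nu_j$ are real and positive, this is exactly the Laplace transform of a sum of independent $\mathrm{Exponential}(\nu_j)$ random variables, proving the final assertion.

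The main obstacle I anticipate is not any of the above linear algebra — that is essentially forced by the Hessenberg form — but rather justifying cleanly that the $d$ nonzero eigenvalues of $-G$ coincide (with multiplicity) with the eigenvalues of $-Q$, and that they have positive real part; the first is immediate from the block-triangular structure of $G$ (the absorbing state contributes the single eigenvalue $0$), and the second follows from the fact that $Q$ generates a strictly substochastic semigroup, so $-Q$ is a nonsingular $M$-matrix and its spectrum lies in the open right half-plane. A secondary subtlety is bookkeeping the signs in the cofactor expansion so that the numerator comes out positive; I would sidestep this entirely by the normalization argument (plug in $u=0$), which also avoids computing $\rho$ and the superdiagonal product explicitly. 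An alternative, perhaps cleaner, route to the same conclusion is to expand $\det(uI - Q)$ along its last column using the Hessenberg recursion to exhibit the characteristic polynomial, and simultaneously track the $(0,d-1)$ entry of the resolvent via the same recursion; but the cofactor/Cramer approach above is the most transparent.
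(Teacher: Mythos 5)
Your argument is correct, but it takes a genuinely different route from the paper's. You work directly in continuous time and compute the Laplace transform $\EE[e^{-uT}]=\rho\,[(uI-Q)^{-1}]_{0,d-1}$ via Cramer's rule, observing that the skip-free property makes $uI-Q$ lower Hessenberg, so the minor obtained by deleting row $d-1$ and column~$0$ is lower triangular with the superdiagonal entries of $-Q$ on its diagonal; hence the cofactor is a constant in~$u$, the denominator is $\prod_j(u+\nu_j)$, and normalizing at $u=0$ fixes the constant at $\prod_j\nu_j$. (One small bookkeeping point: what you call ``upper'' Hessenberg is conventionally ``lower'' Hessenberg, since the vanishing entries are strictly above the first superdiagonal; your description of the vanishing pattern is otherwise accurate, and the minor is indeed lower triangular.) The paper instead proves the discrete-time analogue first by constructing an intertwining $\gL P=\Ph\gL$ with a bidiagonal (pure-birth) ``spectral'' kernel $\Ph$, where $\gL$ is built from the normalized spectral polynomials $Q_k=(1-\theta_0)^{-1}\cdots(1-\theta_{k-1})^{-1}(P-\theta_0 I)\cdots(P-\theta_{k-1}I)$; the skip-free property is used there to make $\gL$ lower triangular, from which $\PP(T\le t)=\Ph^t(0,d)$ follows, and the continuous-time theorem is then deduced from the discrete one. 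Your approach is more elementary and self-contained (plain Cramer's rule, no duality machinery), and it is arguably closer in spirit to the classical linear-algebraic arguments than to the paper's; what the paper's intertwining approach buys, and your cofactor calculation does not, is the sample-path stochastic construction of Section~3 representing $T$ explicitly as a sum of independent exponentials (resp.\ geometrics), as well as the extension in Section~4 to fastest strong stationary times and the generalizations of Section~5 to non-skip-free chains.
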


In this paper we give a new proof for \refT{T:BS}.  In the case of birth-and-death chains (which are time-reversible and thus have real eigenvalues), the result is often attributed to Keilson but in fact (consult~\cite{DM}) dates back at least to Karlin and McGregor~\cite{KM}.  Our proof leads in this case to an explicit (sample-path) representation of the passage time as a sum of independent exponential random variables.   Diaconis and Miclo~\cite{DM} recently obtained the first such representation for birth-and-death chains, but our construction is much simpler.
\smallskip
 
There is an analog for discrete time:

\begin{theorem}\label{T:BSdisc}
Consider an irreducible discrete-time skip-free chain~$Y$ on the nonnegative integers with $Y(0) = 0$.  Given~$d$, let $X$ (with state space $\{0, \dots, d\}$) be obtained from~$Y$ by making~$d$ an absorbing state, and let~$P$ denote the transition matrix for~$X$.  Then the hitting time of state~$d$ (same for~$X$ and~$Y$)  has probability generating function
$$
u \mapsto \prod_{j = 0}^{d - 1} \left[ \frac{(1 - \theta_j) u}{1 - \theta_j u} \right],
$$
where $\theta_0, \dots, \theta_{d - 1}$ are the~$d$ non-unit eigenvalues of~$P$.  In particular, if every $\theta_j$ is real and nonnegative, then the hitting time distribution is the convolution of \emph{Geometric$(1 - \theta_j)$} distributions. 
\end{theorem}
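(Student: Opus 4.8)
The plan is to reduce the discrete-time statement to a purely algebraic fact about the transition matrix~$P$, exactly paralleling what the continuous-time argument does for the generator~$G$. Let $T_d$ denote the hitting time of~$d$. Since~$d$ is absorbing and the chain is skip-free, $P$ has the block form $\begin{pmatrix} Q & r \\ 0 & 1 \end{pmatrix}$, where~$Q$ is the $d \times d$ substochastic transition matrix on the transient states $\{0, \dots, d-1\}$ and $r$ is the column of absorption probabilities. A standard computation gives, for the probability generating function,
\[
\EE\, u^{T_d} \;=\; \sum_{k \ge 1} u^k\, \PP(T_d = k) \;=\; u\, e_0^{\top} (I - u Q)^{-1} r,
\]
valid for $|u|$ small, where $e_0$ is the first standard basis vector. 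Since the eigenvalues of~$P$ are~$1$ together with the eigenvalues $\theta_0, \dots, \theta_{d-1}$ of~$Q$, the claim is equivalent to
\[
u\, e_0^{\top} (I - u Q)^{-1} r \;=\; \prod_{j=0}^{d-1} \frac{(1-\theta_j) u}{1 - \theta_j u}.
\]

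The key structural input, which I would isolate as the heart of the proof, is that skip-freeness forces $e_0^{\top} (I - uQ)^{-1} r$ to be a \emph{rational function whose numerator is constant and whose denominator is $\det(I - uQ)$ up to a constant}; equivalently, $T_d$ is the first passage through a sequence of ``levels'' $0 \to 1 \to \cdots \to d$, so that by the strong Markov property $T_d = \tau_1 + \tau_2 + \cdots + \tau_d$ is a sum of the successive times~$\tau_i$ to climb from level $i-1$ to level~$i$. I would first handle the case where each~$\tau_i$ is genuinely a single ``excursion'' quantity and show each has a rational generating function $g_i(u) = c_i u / (1 - (\text{something}) u)$-type form only after a suitable change of basis; the honest route is via the matrix identity above plus Cramer's rule, which writes $e_0^\top (I-uQ)^{-1} r$ as a ratio of two polynomials in~$u$, the denominator being $\det(I - uQ) = \prod_j (1 - \theta_j u)$. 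It then remains to show the numerator polynomial equals the constant $\prod_j (1-\theta_j) u / u = \prod_j(1-\theta_j) \cdot u$ — i.e.\ has degree exactly~$1$ with the right leading behavior. Degree~$\le 1$ follows because skip-freeness makes the relevant cofactor of $I - uQ$ (the $(0,d-1)$ entry region, corresponding to the unique ``monotone'' path $0 \to 1 \to \cdots \to d-1 \to d$) a product of off-diagonal entries times powers of~$u$ that, combined with the $r$-vector being supported appropriately, collapses; and evaluating at $u = 1$ forces the numerator to equal $\prod_j(1-\theta_j)$ since $\EE\, 1^{T_d} = \PP(T_d < \infty) = 1$ by irreducibility, while the behavior as $u \to 0$ pins the linear coefficient. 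The final sentence of the theorem is then immediate: if $\theta_j \in [0,1)$ for all~$j$, each factor $(1-\theta_j)u/(1-\theta_j u)$ is the probability generating function of a Geometric$(1-\theta_j)$ random variable (supported on $\{1, 2, \dots\}$), and a product of generating functions is the generating function of the convolution.

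The main obstacle I anticipate is establishing the degree-one claim for the numerator cleanly — that is, showing skip-freeness really does kill all the higher-order terms in $e_0^\top (I-uQ)^{-1} r$. Brute-force cofactor expansion is messy; the slick approach is to argue combinatorially via the level decomposition $T_d = \sum_{i=1}^d \tau_i$, prove directly that each $\tau_i$ has a generating function of the form $(1 - a_i)u / (1 - a_i u)$-times-possibly-more-structure by relating $\tau_i$ to a one-dimensional renewal/excursion picture (using that the chain, watched only at visits to $\{i-1, i\}$ while below level~$i$, and using skip-freeness so it \emph{must} pass through~$i-1$ to reach~$i$), and then match the product $\prod_i$ against $\prod_j (1-\theta_j)u/(1-\theta_ju)$ via the determinant identity. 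Alternatively — and this is likely the route the paper takes — one invokes a strong stationary duality or an absorption-time-for-a-pure-death-chain construction: build an auxiliary discrete-time pure-death chain on $\{0,\dots,d\}$ whose holding/down parameters are $1-\theta_0, \dots, 1-\theta_{d-1}$ and whose absorption time at~$d$ is by inspection a sum of independent Geometrics with the stated parameters, then show (via the shared characteristic polynomial of~$Q$ and the skip-free structure) that this chain's absorption time has the same generating function as~$T_d$. I would expect the continuous-time proof of \refT{T:BS} given later in the paper to carry over essentially verbatim, with $\nu_j \leftrightarrow 1-\theta_j$ and exponential holding times replaced by geometric ones, so the real work is checking that each lemma survives the discretization; the algebraic skeleton — characteristic polynomial of the transient block, skip-free path structure forcing a single monotone path, normalization at the ``total mass'' point — is identical.
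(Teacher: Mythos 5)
Your approach via Cramer's rule is sound and genuinely different from the paper's. The paper builds an explicit intertwining $\Lambda P = \widehat{P}\Lambda$, with $\Lambda$ made of normalized spectral polynomials $Q_k = \prod_{r<k}(1-\theta_r)^{-1}(P-\theta_r I)$ and $\widehat{P}$ the pure-\emph{birth} (not pure-death, as you write) ``spectral'' kernel; lower triangularity of $\Lambda$ and a limit argument give $\PP(T\le t)=\widehat{P}^t(0,d)$, and the pgf follows by inverting the bidiagonal $I-u\widehat{P}$. You instead compute $\EE\,u^T = u\,e_0^\top(I-uQ)^{-1}r$ directly and aim to identify numerator and denominator. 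That works, but the numerator step as you state it is off: the correct calculation is that skip-freeness forces $r = p_{d-1,d}\,e_{d-1}$, so you only need the $(0,d-1)$ entry of $(I-uQ)^{-1}$, which by the adjugate formula is $(-1)^{d-1}$ times the determinant of the minor of $I-uQ$ obtained by deleting row $d-1$ and column $0$. Because $I-uQ$ vanishes above its superdiagonal, that minor is \emph{lower triangular} with diagonal entries $-u\,p_{i,i+1}$ ($i=0,\dots,d-2$), so its determinant is the single monomial $(-u)^{d-1}\prod_{i=0}^{d-2}p_{i,i+1}$ — nothing ``messy'' about the cofactor, and the numerator of $\EE u^T$ has degree exactly $d$, not degree $1$ as you claim (you seem to have conflated the degree of each factor with the total). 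Putting this together, $\EE\,u^T = u^d\prod_{i=0}^{d-1}p_{i,i+1}\big/\prod_{j=0}^{d-1}(1-\theta_j u)$, and since $\rho(Q)<1$ the pgf extends to $u=1$, where $\EE\,1^T=1$ forces $\prod_i p_{i,i+1}=\prod_j(1-\theta_j)$; the geometric-convolution corollary is then immediate. What your route does not give, and what the intertwining does, is the explicit stochastic construction: the paper reuses $\Lambda$ in \refS{S:constructs} to build a coupled pure-birth chain whose inter-birth times are the independent geometrics summing to $T$, and the same $\Lambda$ drives the strong-stationary-time analogue in \refS{S:TTSproof}. Your determinant computation establishes only the distributional identity. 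One further small correction: the paper derives \refT{T:BS} \emph{from} \refT{T:BSdisc} (via the one-line argument cited from \cite{FillKeil}), so your expectation that the discrete proof should be obtained by discretizing a continuous-time proof is backwards.
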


\begin{remark}
The conclusions of Theorems~\ref{T:BS} and~\ref{T:BSdisc} hold for all skip-free chains~$X$ on $\{0, \dots, d\}$ for which $g_{i, i + 1} > 0$ (respectively, $p_{i, i + 1} > 0$) for $i = 0, \dots, d - 1$.  Indeed, our proofs extend to that case, or one can extend the theorems by a simple perturbation argument. 
\end{remark}
 
Results similar to Theorems~\ref{T:BS}--\ref{T:BSdisc} can also be established for fastest strong stationary times (consult~\cite{DF}, \cite{TTS}, and~\cite{dualityc} for general background, and~(3.3) and~(3.5) of~\cite{dualityc} for how to check from the generator~$G$ of a continuous-time chain whether the chain has the ``monotone likelihood ratio'' property, that is, whether its time-reversal is stochastically monotone):

\begin{theorem} \label{T:TTS} 
\ \ \\
\emph{(a)} Consider an ergodic (equivalently, irreducible) continuous-time skip-free chain~$X$ on the state space $\{0, \dots, d\}$ with $X(0) = 0$ and stochastically monotone time-reversal.  Let~$G$ denote the generator for~$X$.  Then a fastest strong stationary time (SST) for~$X$ has Laplace transform
$$
u \mapsto \prod_{j = 0}^{d - 1} \frac{\nu_j}{\nu_j + u},
$$
where $\nu_0, \dots, \nu_{d - 1}$ are the~$d$ nonzero eigenvalues of $- G$ (known to have positive real parts).  In particular, if the $\nu_j$'s are real, then the fastest SST distribution is the convolution of \emph{Exponential$(\nu_j)$} distributions. 
\smallskip

\noindent
\emph{(b)} Consider an ergodic (equivalently, irreducible and aperiodic) discrete-time skip-free chain~$X$ on the state space $\{0, \dots, d\}$ with $X(0) = 0$ and stochastically monotone time-reversal.  Let~$P$ denote the transition matrix for~$X$.  Then a fastest strong stationary time (SST) for~$X$ has probability generating function
$$
u \mapsto \prod_{j = 0}^{d - 1} \left[ \frac{(1 - \theta_j) u}{1 - \theta_j u} \right],
$$
where $\theta_0, \dots, \theta_{d - 1}$ are the~$d$ non-unit eigenvalues of~$P$.  In particular, if every $\theta_j$ is real and nonnegative, then the fastest SST distribution is the convolution of \emph{Geometric$(1~-~\theta_j)$} distributions. 
\end{theorem}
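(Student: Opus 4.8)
The plan is to identify a fastest strong stationary time (SST) for~$X$ with the absorption time at~$d$ of the strong stationary duality (SSD) dual chain, and then to recognize that dual as an upward skip-free chain to which Theorems~\ref{T:BS} and~\ref{T:BSdisc} (in the extended form noted in the Remark) apply. Let $\pi$ be the stationary distribution of~$X$, put $\Pi_i := \sum_{k \le i} \pi(k)$, and let $\Lambda$ be the link on $\{0, \dots, d\}$ given by $\Lambda(i, j) = \pi(j)\, \mathbf{1}[j \le i] / \Pi_i$. Then $\Lambda$ is lower triangular with positive diagonal, hence invertible with $\Lambda^{-1}$ also lower triangular, and $\Lambda(0, \cdot) = \delta_0$ while $\Lambda(d, \cdot) = \pi$. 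Because $X(0) = 0$ and the time-reversal of~$X$ is stochastically monotone, the SSD theory of~\cite{DF}, \cite{TTS}, and~\cite{dualityc} applies with this link: $P^* := \Lambda P \Lambda^{-1}$ (in the discrete-time case) and $G^* := \Lambda G \Lambda^{-1}$ (in the continuous-time case) are, respectively, a bona fide transition matrix and a bona fide generator; the associated chain~$X^*$ starts from~$0$; state~$d$ is absorbing for~$X^*$ (from $\pi P = \pi$ and $\Lambda(d, \cdot) = \pi$ one gets $P^*(d, \cdot) = \delta_d$, and likewise $\pi G = 0$ gives $G^*(d, \cdot) = 0$); and -- the crucial input I borrow from that theory -- the absorption time of~$X^*$ at~$d$ is a fastest SST for~$X$.

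It remains to compute the law of this absorption time, and here the point is that~$X^*$ is itself upward skip-free with strictly positive up-rates. Skip-freeness: as matrices, $P$ and~$G$ are lower Hessenberg, and the product of the lower-triangular matrix~$\Lambda$, the lower-Hessenberg matrix~$P$ (or~$G$), and the lower-triangular matrix~$\Lambda^{-1}$ is again lower Hessenberg, so $P^*$ (resp.~$G^*$) is upward skip-free. Positivity of the up-rates: inserting the explicit forms of~$\Lambda$ and~$\Lambda^{-1}$ and using the skip-free structure of~$P$, the double sum defining $P^*(i, i+1)$ collapses to a single term, giving
$$
P^*(i, i+1) = \frac{\pi(i)\, \Pi_{i+1}}{\pi(i+1)\, \Pi_i}\, P(i, i+1), \qquad G^*(i, i+1) = \frac{\pi(i)\, \Pi_{i+1}}{\pi(i+1)\, \Pi_i}\, G(i, i+1)
$$
for $i = 0, \dots, d-1$. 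Since~$X$ is irreducible and upward skip-free, any path from~$0$ to~$i+1$ must at some point step directly from~$i$ to~$i+1$, so $P(i, i+1) > 0$ (resp.\ $G(i, i+1) > 0$); hence the up-rates of~$X^*$ are strictly positive and~$X^*$ meets the hypotheses of the Remark.

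Finally, $P^*$ is conjugate to~$P$ via~$\Lambda$, so the two matrices have the same spectrum; as~$X$ is ergodic, the eigenvalue~$1$ of~$P$ is simple, so the~$d$ non-unit eigenvalues of~$P^*$ are exactly the~$d$ non-unit eigenvalues $\theta_0, \dots, \theta_{d-1}$ of~$P$ (and in the continuous-time case $-G^*$ and $-G$ share the simple eigenvalue~$0$ together with the same~$d$ eigenvalues $\nu_0, \dots, \nu_{d-1}$ of positive real part). Applying Theorem~\ref{T:BSdisc} (resp.\ Theorem~\ref{T:BS}), as extended by the Remark, to the chain~$X^*$ now shows that the absorption time of~$X^*$ at~$d$ -- equivalently, a fastest SST for~$X$ -- has probability generating function $\prod_{j=0}^{d-1} [(1 - \theta_j) u / (1 - \theta_j u)]$ (resp.\ Laplace transform $\prod_{j=0}^{d-1} \nu_j/(\nu_j + u)$), which reduces to a convolution of Geometric (resp.\ Exponential) distributions when the eigenvalues are real and nonnegative (resp.\ real). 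The main obstacle is not any single computation but the careful import of the SSD apparatus -- specifically, the fact that for a chain started at~$0$ with stochastically monotone time-reversal this particular link yields a genuine Markov dual whose absorption time is the stochastically smallest strong stationary time; given that, the skip-free structure of the dual and the spectral identity are the new -- but routine -- ingredients that let Theorems~\ref{T:BS}--\ref{T:BSdisc} complete the proof.
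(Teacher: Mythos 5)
Your proof is correct but takes a genuinely different route from the paper's.  The paper proves part~(b) by reusing the spectral-polynomial link $\gL(i,j) = Q_i(0,j)$ from \refS{S:12proof}, establishing the intertwining $\gL P^t = \Ph^t \gL$ as in \refL{L:gL}, and then reading off the $(0,d)$ entry: the left side equals $P^t(0,d) = \PP(T\le t)\,\pi(d)$ by the separation identity (here the stochastic monotonicity of the time-reversal enters only to guarantee that $P^t(0,\cdot)/\pi(\cdot)$ is minimized at~$d$), while the right side equals $\Ph^t(0,d)\,\gL(d,d)$, and matching constants gives $\gL(d,d)=\pi(d)$.  You instead take the classical strong-stationary-duality link $\gL(i,j) = \pi(j)\mathbf{1}[j\le i]/\Pi_i$, form the Diaconis--Fill dual $P^* = \gL P \gL^{-1}$, observe that conjugating a lower-Hessenberg matrix by lower-triangular matrices keeps it lower-Hessenberg so that $X^*$ is again skip-free with positive up-rates $P^*(i,i+1) = [\pi(i)\Pi_{i+1}/(\pi(i+1)\Pi_i)]P(i,i+1) > 0$, note that similarity preserves the spectrum, and then invoke \refT{T:BSdisc} (in the extended form of the Remark) for the absorption time of~$X^*$, which SSD theory identifies as a fastest SST for~$X$.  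Both arguments rest on the same hypotheses and reach the same conclusion, but the trade-off is clear: the paper's route is self-contained modulo \refS{S:12proof} and never mentions the dual chain, whereas your route imports the full SSD apparatus of~\cite{DF} and~\cite{dualityc} (existence and nonnegativity of the dual kernel, and the theorem that the dual's absorption time is a fastest SST) as a black box.  What your approach buys is a structural insight the paper's proof does not make explicit, namely that the SSD dual of an ergodic skip-free chain with monotone reversal is itself a skip-free absorbing chain with positive up-rates, reducing the SST problem to the hitting-time problem in a single conceptual step.
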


Our proofs of Theorems~\ref{T:BS}--\ref{T:BSdisc} and~\ref{T:TTS} could hardly be simpler in concept.  Take \refT{T:BSdisc}, for instance.  Let~$\theta_0, \dots, \theta_{d - 1}$ be ordered arbitrarily.  We will exhibit a matrix~$\gL$, with rows and columns indexed by the state space $\{0, \dots, d\}$, having the following properties:
\begin{enumerate}
\item $\gL$ is lower triangular.
\item The rows of~$\gL$ sum to unity.
\item $\gL P = \Ph \gL$, where $\Ph$ is defined by
\begin{equation}
\label{phat}
\ph_{i j} := 
 \begin{cases}
 \theta_i   & \mbox{if $j = i$} \\
 1 - \theta_i   & \mbox{if $j = i + 1$} \\
 0   & \mbox{otherwise},
 \end{cases} 
\end{equation}
for $i, j = 0, \dots, d$ (with $\theta_d = 1$).
\end{enumerate}
We will prove that from these three properties follows
\begin{equation}
\label{key}
\PP(T \leq t) = \Ph^t(0, d), \quad t = 0, 1, \dots,
\end{equation}
where~$T$ is the absorption time in question.  [In fact, all we will need to prove~\eqref{key} are~(iii) and the following two easy consequences of (i)--(ii):
\begin{enumerate}
\item[(iv)] $\gL(0, k) = \delta_{0, k}$.
\item[(v)] $\gL(k, d) = 0$ if $k \neq d$.]
\end{enumerate}
In the special case that the $\theta_j$'s are real and nonnegative, then \refT{T:BSdisc} follows immediately from~\eqref{key}; the general case will not take too much additional work.  Moreover, for birth-and-death chains with nonnegative eigenvalues the proof will reveal how to identify (i.e.,\ construct) geometric random variables summing to~$T$.

In Section~1 of~\cite{FillKeil} it was shown how that paper's main continuous-time theorem (Theorem~1.1) follows immediately from the main discrete-time theorem (Theorem~1.2).  By the same one-line proof, in the present paper \refT{T:BS} follows immediately from \refT{T:BSdisc}.  For this reason we choose not to present a direct proof of \refT{T:BS}, but in the case of birth-and-death chains we will give the explicit construction of independent exponential random variables summing to the absorption time.  For the same reason we will prove only part~(b) of \refT{T:TTS}.

Here is an outline for the paper.  In \refS{S:12proof} we prove \refT{T:BSdisc}, and in \refS{S:constructs} we give stochastic constructions for Theorems~\ref{T:BS}--\ref{T:BSdisc} in the special case of birth-and-death chains.  \refS{S:TTSproof} is devoted to a proof of \refT{T:TTS}(b).
In \refS{S:general} we present extensions of our results, including stochastic constructions, to more general chains.

\section{Proof of \refT{T:BSdisc}} \label{S:12proof}

In this section we prove~\refT{T:BSdisc}, following the outline near the end of \refS{S:intro}.  Denote the eigenvalues of~$P$, with each distinct eigenvalue listed as many times as its algebraic multiplicity, by $\theta_0, \dots, \theta_d$.  We claim that precisely one of these, say $\theta_d$, equals~$1$ and that the rest have modulus smaller than~$1$; how the rest are ordered will for the majority of our results be irrelevant (we shall take special notice otherwise).  

Here is a proof of the simple claim.  Let $P'$ denote the leading principal $d$-by-$d$ submatrix of~$P$.  Expanding the determinant of $P - \theta I$ on the last row, we find that the $d + 1$ eigenvalues of~$P$ are $\theta = 1$ together with the~$d$ eigenvalues of~$P'$.  But by our assumptions that $p_{i, i + 1} > 0$ for $t = 0, \dots, d - 1$ and~$d$ is absorbing, we have $(P')^t \to 0$ and so (see for example \cite[Theorem 5.6.12]{HJ}) $\rho(P') < 1$.

Let~$I$ denote the identity matrix and define
\begin{equation}
\label{Qkdef}
Q_k := (1 - \theta_0)^{-1} \cdots (1 - \theta_{k-1})^{-1} (P - \theta_0 I) \cdots (P - \theta_{k-1} I), \quad k = 0, \dots, d,
\end{equation}
with the natural convention $Q_0 := I$.  Note that
\begin{equation}
\label{Qevo}
Q_k P = \theta_k Q_k + (1 - \theta_k) Q_{k + 1}, \quad  k = 0, \dots, d - 1,
\end{equation}
and that the rows of each~$Q_k$ all sum to~$1$.

Define
\begin{equation}
\label{gLdef}
\gL(i, j) := Q_i(0, j), \quad i, j = 0, \dots, d.
\end{equation}

\begin{lemma} \label{L:gL}
Let~$P$ be as in \refT{T:BSdisc}.  Then the matrix~$\gL$ defined at~\eqref{gLdef} enjoys properties \emph{(i)--(iii)} listed in \refS{S:intro}.
\end{lemma}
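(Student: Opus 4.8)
The plan is to check properties~(i)--(iii) one at a time, deferring lower-triangularity to the end because that is the only place the skip-free hypothesis enters. Property~(ii) is immediate from the remark following~\eqref{Qevo} that the rows of every~$Q_k$ sum to~$1$: in particular $\sum_{j = 0}^d \gL(i, j) = \sum_{j = 0}^d Q_i(0, j) = 1$ for each~$i$.

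For property~(iii) I would compare the two sides entrywise. Since $\gL(i, k) = Q_i(0, k)$, we have $(\gL P)(i, j) = \sum_k Q_i(0, k) P(k, j) = (Q_i P)(0, j)$. For $i = 0, \dots, d - 1$ the evolution identity~\eqref{Qevo} turns this into $\theta_i Q_i(0, j) + (1 - \theta_i) Q_{i + 1}(0, j) = \theta_i \gL(i, j) + (1 - \theta_i) \gL(i + 1, j)$, which is precisely $(\Ph \gL)(i, j)$ by the definition~\eqref{phat}. For the last row $i = d$ I would use Cayley--Hamilton together with the factorization of the characteristic polynomial of~$P$ noted at the start of this section: $\det(\theta I - P) = \prod_{\ell = 0}^d (\theta - \theta_\ell)$ (the eigenvalues being listed with multiplicity), hence $\prod_{\ell = 0}^d (P - \theta_\ell I) = 0$; since $\theta_d = 1$ this rearranges to $P \prod_{\ell = 0}^{d - 1}(P - \theta_\ell I) = \prod_{\ell = 0}^{d - 1}(P - \theta_\ell I)$, that is, $Q_d P = Q_d$. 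Therefore $(\gL P)(d, j) = Q_d(0, j) = \gL(d, j)$, and this equals $(\Ph \gL)(d, j)$ because row~$d$ of~$\Ph$ has its only nonzero entry, a~$1$, in column~$d$.

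Property~(i) is the one point that takes thought, and it is where skip-freeness is used. By~\eqref{Qkdef}, $Q_i$ is a positive scalar multiple of $(P - \theta_0 I) \cdots (P - \theta_{i - 1} I)$, hence a polynomial in~$P$ of degree~$i$; so it is enough to show that row~$0$ of this product is supported on $\{0, \dots, i\}$. I would prove this by a support-propagation argument: if a row vector~$v$ vanishes outside $\{0, \dots, s\}$, then for any $j > s + 1$ and any~$\theta$ the $j$th entry of $v(P - \theta I)$ equals $\sum_k v_k p_{k, j} - \theta v_j = 0$, since a nonzero summand $v_k p_{k, j}$ would require both $k \le s$ and, by upward skip-freeness ($p_{k, j} = 0$ whenever $j \ge k + 2$), $j \le k + 1 \le s + 1$. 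Starting from the unit row vector supported on $\{0\}$ and applying the~$i$ factors $P - \theta_0 I, \dots, P - \theta_{i - 1} I$ in turn, the support can grow by at most one coordinate at each step, so the final vector --- a scalar multiple of row~$0$ of~$Q_i$ --- is supported on $\{0, \dots, i\}$. Hence $\gL(i, j) = Q_i(0, j) = 0$ for $j > i$, which is~(i). I expect this support bookkeeping, rather than the routine algebra behind~(ii) and~(iii), to be the only real content of the proof.
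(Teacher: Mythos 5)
Your argument is correct and follows the same route as the paper's terse proof: \eqref{Qevo} gives equality of rows $0,\dots,d-1$ in property~(iii), Cayley--Hamilton gives row~$d$, the row-sum property of each $Q_k$ gives~(ii), and skip-freeness gives~(i), which you usefully flesh out via the support-propagation observation that right-multiplying a row vector by $P-\theta I$ can enlarge its support by at most one index. One trivial slip: the normalizing scalar $(1-\theta_0)^{-1}\cdots(1-\theta_{i-1})^{-1}$ need not be \emph{positive} (the $\theta_j$ may be complex or negative), but your argument only uses that it is a nonzero scalar, so nothing is affected.
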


\begin{proof}
\noqed
The rows of~$\gL$ sum to unity because each of the basic factors $(1 - \theta_r)^{-1} (P - \theta_r I)$ in~\eqref{Qkdef} has that property; and since~$P$ is skip-free, $\gL$ is lower triangular.  Our next claim is that $\gL P = \Ph \gL$, where~$\Ph$ is defined at~\eqref{phat}.  Indeed, equality of $k$th rows is clear from~\eqref{Qevo} for $k = 0, \dots, d - 1$ and from the Cayley--Hamilton theorem for $k = d$.
\end{proof}

\begin{remark} \label{R:nonsingular}
It is not surprising that the eigenvalues of~$P$ line the diagonal of the upper triangular 
matrix~$\Ph$, because our~$\gL$ is nonsingular and so property~(iii) implies that~$P$ 
and~$\Ph$ are similar.  To see that the lower triangular matrix $\gL$ is nonsingular, we check that its diagonal entries are all nonzero.  Indeed, for $k = 0, \dots, d$, we use the skip-free property 
of~$P$ to calculate:
$$
\gL(k, k) = Q_k(0, k) = \frac{p_{0, 1} \cdots p_{k - 1, k}}{(1 - \theta_0) \cdots (1 - \theta_{k - 1})} \neq 0. 
$$
\end{remark}
\medskip

So we have found a matrix~$\gL$ satisfying properties (i)--(iii), and properties (iv)--(v) follow easily from (i)--(ii).  Notice that property~(iii) immediately extends to
\begin{equation}
\label{twine}
\gL P^t = \Ph^t \gL, \quad t = 0, 1, \dots.
\end{equation}

\begin{lemma} \label{L:key}
Let~$P$ and~$X$, with absorption time~$T$, be as in \refT{T:BSdisc} and let~$\Ph$ be the bidiagonal matrix defined at~\eqref{phat}.  Then~\eqref{key} holds:
$$
\PP(T \leq t) = \Ph^t(0, d), \quad t = 0, 1, \dots.
$$
\end{lemma}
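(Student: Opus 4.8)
The plan is to use the intertwining relation \eqref{twine}, namely $\gL P^t = \Ph^t\gL$, together with properties (iv) and (v) of $\gL$, to collapse that matrix identity to a scalar identity at the single entry $(0,d)$.

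First I would record the elementary fact that, since $d$ is absorbing and $X(0)=0$, the event $\{T\le t\}$ coincides with the event $\{X(t)=d\}$; hence $\PP(T\le t)=P^t(0,d)$, and it remains only to show $P^t(0,d)=\Ph^t(0,d)$ for every $t\ge 0$. Next I would extract the $(0,d)$ entry of each side of \eqref{twine}. On the left, property (iv) ($\gL(0,k)=\delta_{0,k}$) reduces $\sum_{k}\gL(0,k)P^t(k,d)$ to $P^t(0,d)$. On the right, property (v) ($\gL(k,d)=0$ whenever $k\ne d$) reduces $\sum_{k}\Ph^t(0,k)\gL(k,d)$ to $\gL(d,d)\,\Ph^t(0,d)$. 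Thus $P^t(0,d)=\gL(d,d)\,\Ph^t(0,d)$ for all $t$, so \eqref{key} will follow as soon as we know the normalization $\gL(d,d)=1$.

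That normalization is the only point that requires an argument, and I would obtain it from the $d$th-row instance of property (iii). Since $\theta_d=1$, the last row of $\Ph$ is $(0,\dots,0,1)$, so $(\Ph\gL)(d,j)=\gL(d,j)$; comparing $d$th rows in $\gL P=\Ph\gL$ therefore gives $\sum_{k}\gL(d,k)P(k,j)=\gL(d,j)$ for all $j$, i.e.\ the $d$th row of $\gL$ is a left eigenvector of $P$ for eigenvalue $1$. Under the standing assumption $p_{i,i+1}>0$, the absorbing chain $X$ has $\{d\}$ as its unique closed class, so this left eigenspace is one-dimensional, spanned by $(0,\dots,0,1)$; as the rows of $\gL$ sum to $1$ (property (ii)), the $d$th row of $\gL$ is exactly $(0,\dots,0,1)$, whence $\gL(d,d)=1$. (Equivalently one may invoke \refR{R:nonsingular} together with $\prod_{j<d}(1-\theta_j)=\det(I-P')=p_{0,1}\cdots p_{d-1,d}$, or simply let $t\to\infty$ in the displayed identity, using that both $P^t(0,d)$ and $\Ph^t(0,d)$ converge to $1$.)

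Plugging $\gL(d,d)=1$ into $P^t(0,d)=\gL(d,d)\,\Ph^t(0,d)$ then yields $\PP(T\le t)=\Ph^t(0,d)$, as desired. The whole argument is mechanical once the intertwining \eqref{twine} and the boundary conditions (iv)--(v) are in hand; pinning down the normalization $\gL(d,d)=1$ is the only step that is not pure bookkeeping, and that is where I expect the (modest) work to lie.
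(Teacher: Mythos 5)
Your proof is correct, and the skeleton is the same as the paper's: extract the $(0,d)$ entry of $\gL P^t=\Ph^t\gL$, use (iv) on the left to get $P^t(0,d)=\PP(T\le t)$ and (v) on the right to get $\gL(d,d)\,\Ph^t(0,d)$, and then pin down the normalization $\gL(d,d)=1$. Where you genuinely diverge is in that last step. The paper passes to the limit $t\to\infty$: $\PP(T\le t)\to1$ trivially, and then it proves $\Ph^t(0,d)\to1$ by writing $\Ph$ as a bordered matrix $\bigl[\begin{smallmatrix}A&b\\0&1\end{smallmatrix}\bigr]$ with $\rho(A)<1$ and computing the limit of $\Ph^t$ explicitly. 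You instead read off from the $d$th row of $\gL P=\Ph\gL$ (since $\theta_d=1$) that $\gL(d,\cdot)$ is a left $1$-eigenvector of $P$, observe that $\{d\}$ is the unique closed class so that this eigenspace is one-dimensional and spanned by $\delta_d$, and finish using property (ii) to normalize. That is a clean, more probabilistic/Perron--Frobenius-flavored shortcut that avoids the bordered-matrix computation entirely; the small trade-off is that you invoke property (ii) in addition to the paper's minimal working set (iii)--(v), and of your two parenthetical alternatives, the ``let $t\to\infty$'' one silently presupposes $\Ph^t(0,d)\to1$, which is exactly the point the paper takes care to establish. Your main argument, however, is complete and correct.
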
 

\begin{proof}
Let~$\gL$ be any matrix enjoying properties (iii)--(v) of \refS{S:intro}; recall that Lemma \ref{L:gL} provides such a~$\gL$.  We will use~$\gL$ to establish the desired result, and the consequence~\eqref{twine} of property~(iii) will be key.

We consider the $(0, d)$ entry of each side of~\eqref{twine}.  On the left we have
$$
(\gL P^t)(0, d) = \sum_k \gL(0, k) P^t(k, d) = P^t(0, d) = \PP(T \leq t),
$$
where the second equality is immediate from property~(iv).  On the right we have
$$
(\Ph^t \gL)(0, d) = \sum_k \Ph^t(0, k) \gL(k, d) = \Ph^t(0, d) \gL(d, d),
$$
where the second equality is immediate from property~(v).  So all that remains is to establish that $\gL(d, d) = 1$.

We do this by passing to the limit as $t \to \infty$ in our now-established equation
$$
\PP(T \leq t) = \Ph^t(0, d) \gL(d, d).
$$
The limit on the left is, of course, $1$.  Moreover, we claim that $\Ph^t(0, d) \to 1$, and then the proof of the lemma will be complete.  The claim is probabilistically obvious when the eigenvalues are all real and nonnegative.  To see the claim in general, write $\Ph$ as the bordered matrix
$$
\Ph =
\left[ 
\begin{array}{cc}
A & b \\
0 & 1
\end{array}
\right]
$$  
by breaking off the last row and column.  The matrix~$A$ has spectral radius $\rho(A) < 1$; and  because the rows of~$\Ph$ sum to unity, we have $b = (I - A) {\bf 1}$, where~${\bf 1}$ denotes the vector of~$d$ ones.  In this notation we find
$$
\Ph^t =
\left[ 
\begin{array}{cc}
A^t & (I + A + \cdots + A^{t - 1}) b \\
0 & 1
\end{array}
\right]
\to
\left[
\begin{array}{cc}
0 & (I - A)^{-1} b \\
0 & 1
\end{array}
\right],
$$
which is a matrix with~$1$s throughout the last column and~$0$s elsewhere.  In particular, 
$\Ph^t(0, d) \to 1$. 
\end{proof}

As mentioned in \refS{S:intro}, \refL{L:key} is clearly all that is needed to prove \refT{T:BSdisc} when the eigenvalues of~$P$ are nonnegative real numbers.  In general we proceed as follows.  From \refL{L:key} it follows that~$T$ has probability generating function
\begin{equation}
\label{pgf}
\EE\,u^T = (1 - u)\,(I - u \Ph)^{-1} (0, d), \quad |u| < 1.
\end{equation}
But the simple form of~$\Ph$ makes it an easy matter to invert $1 - u \Ph$ explicitly:\ the inverse is upper triangular, with
\begin{equation}
\label{inverse}
(I - u \Ph)^{-1} (i, j) = \frac{(1 - \theta_i) \cdots (1 - \theta_{j - 1}) u^{j - i}}{(1 - \theta_i u) \cdots (1 - \theta_j u)}, \qquad 0 \leq i \leq j \leq d.
\end{equation}
Taking $i = 0$ and $j = d$ and combining \eqref{pgf}--\eqref{inverse} gives~\refT{T:BSdisc}.
\ignore{
When $\theta_0, \dots, \theta_{d - 1}$ are distinct, we have the explicit expression
\begin{equation}
\label{explicit}
\Ph^t(0, d) = 1 - \sum_{k = 0}^{d - 1} \theta^t_k \prod \frac{1 - \theta_r}{\theta_k - \theta_r},
\end{equation}
where the product is over $r \neq k$ satisfying $0 \leq r \leq d - 1$.
}

\section{Stochastic constructions for Theorems~\ref{T:BS} and~\ref{T:BSdisc}} \label{S:constructs}

In this section we consider Theorems~\ref{T:BS} and~\ref{T:BSdisc} again in the case of a birth-and-death chain.  For such a chain, the eigenvalues $\nu_j$ or $\theta_j$ are all real; this can be seen by perturbing (by an arbitrarily small amount) to get an ergodic generator or kernel, which is time-reversible and thus diagonally similar to a symmetric matrix.  We show how the proof in \refS{S:12proof} of \refT{T:BSdisc} yields an explicit construction of independent geometric random variables summing to the hitting time in the case of a birth-and-death chain having nonnegative eigenvalues (for which it is sufficient that the holding probabilities satisfy $p_{i i} \geq 1 / 2$ for all~$i$).  We also exhibit an analogous representation of the hitting time in \refT{T:BS} as a sum of independent exponential random variables in the case of a continuous-time birth-and-death chain.

We begin with the discrete case.  Returning to~\eqref{Qkdef}, we now suppose that~$P$ is a discrete-time birth-and-death kernel with nonnegative eigenvalues $\theta_j$ ordered so that $0 \leq \theta_0 \leq \cdots \leq \theta_{d - 1} < \theta_d = 1$.  The polynomials $(P - \theta_0 I) \cdots (P - \theta_{k - 1}I)$ in~$P$ appearing at~\eqref{Qkdef} are called \emph{spectral polynomials}.  We claim that the spectral polynomials are all nonnegative matrices, that is, that the matrices~$Q_k$ at~\eqref{Qkdef} are stochastic; this follows immediately by perturbation from Lemma~4.1 in~\cite{FillKeil} (which concerns ergodic birth-and-death kernels and is in turn an immediate consequence of \cite[Theorem~3.2]{MW}.  Reversibility of~$P$ plays a key role.)  Thus the matrix~$\gL$ defined at~\eqref{gLdef} is stochastic, as of course is the pure-birth kernel~$\Ph$ defined at~\eqref{phat}.  For stochastic~$P$, $\Ph$, and~$\gL$, the identity $\gL P^t = \Ph^t \gL$ at~\eqref{twine} is read as ``the semigroups $(P^t)_{t \geq 0}$ and $(\Ph^t)_{t \geq 0}$ are intertwined by the link~$\gL$''.

Whenever we have such an intertwining and (to be specific) $X_0 = 0$, Section~2.4 of~\cite{DF} shows one way to construct explicitly, from~$X$ and independent randomness, another Markov chain, say~$\Xh$, with kernel~$\Ph$ such that
\begin{equation}
\label{history}
\Lc(X_t\,|\,\Xh_0, \dots, \Xh_t) = \gL(\Xh_t, \cdot) \quad \mbox{for all~$t$}.
\end{equation}
In particular, since the link~$\gL$ is lower triangular [by \refL{L:gL}(i)] and $\gL(d, d) = 1$ (recall the proof of \refL{L:key}), so that $\gL(d, \cdot)$ is unit mass at~$d$, it follows that~$\Xh$ and~$X$ (and also~$Y$ of \refT{T:BSdisc}) have the same absorption time~$T$.  The discussion in~\cite{DF} is used in that paper in the case that~$P$ is ergodic and~$\Ph$ is absorbing, but it applies equally well to any intertwining.  Once we have built the pure birth chain $\Xh$, the independent geometric random variables summing to~$T$ are simply the waiting times between successive births in~$\Xh$. 

Here is our construction of~$\Xh$; it has the same form (but with~$\gLh$ changed to~$\gL$) as in Section~4.1 of~\cite{FillKeil}, which applied to a different setting, and we repeat it here for the reader's convenience.  The chain~$X$ starts with $X_0 = 0$ and we set $\Xh_0 = 0$.  Inductively, we will have $\gL(\Xh_t, X_t) > 0$ (and so $X_t \leq \Xh_t$) at all times~$t$.  The value we construct for $\Xh_t$ depends only on the values of~$\Xh_{t - 1}$ and~$X_t$ and independent randomness.  Indeed, given $\Xh_{t - 1} = \xh$ and $X_t = y$, if $y \leq \xh$ then our construction sets $\Xh_t = \xh + 1$ with probability
\begin{equation}
\label{phatpath}
\frac{\Ph(\xh, \xh + 1) \gL(\xh + 1, y)}{(\Ph \gL)(\xh, y)} = \frac{(1 - \theta_{\xh}) \gL(\xh + 1, y)}{\theta_{\xh} \gL(\xh, y) + (1 - \theta_{\xh}) \gL(\xh + 1, y)} =  \frac{(1 - \theta_{\xh}) Q_{\xh + 1}(0, y)}{(Q_{\xh} P)(0, y)}
\end{equation}
and $\Xh_t = \xh$ with the complementary probability; if $y = \xh + 1$ (which is the only other possibility, since $y = X_t \leq X_{t - 1} + 1 \leq \xh + 1$ by induction), then we set $\Xh_t = \xh + 1$ with certainty.

\begin{remark}
(a) By~\eqref{history} and the lower-triangularity of~$\gL$, our construction satisfies $X_t \leq \Xh_t$ for all~$t$.   Thus, among all discrete-time birth-and-death chains on $\{0, \dots, d\}$ started at~$0$ and with absorbing state~$d$ and given nonnegative eigenvalues $0 \leq \theta_0 \leq \theta_1 \leq \cdots \leq \theta_{d - 1} < \theta_d = 1$, the pure-birth ``spectral'' kernel~$\Ph$ defined at~\eqref{phat} is stochastically maximal at every epoch~$t$.

(b) In the general setting of~\refT{T:BSdisc}, we do not know any broad class of examples other than the birth-and-death chains we have just treated for which the eigenvalues are nonnegative real numbers and the spectral polynomials are nonnegative matrices.  Nevertheless, the stochastic construction of the preceding paragraph applies verbatim to all such chains.
\end{remark}

For continuous-time birth-and-death chains, and more generally for continuous-time skip-free chains with real eigenvalues and generator~$G$ having nonnegative spectral polynomials
$$
(G + \nu_0 I) \cdots (G + \nu_{k - 1} I), \quad k = 0, \dots, d,
$$
there is an analogous construction.  It takes a bit more effort to describe than for discrete-time chains, so we refer the reader to Section~5.1 of~\cite{FillKeil}, where again~$\gLh$ there is changed to~$\gL$ here, for details.  Briefly, if the bivariate chain $(\Xh, X)$ is in state $(\xh, x)$ at a given jump time, then we construct an exponential random variable with rate 
$$
r = \nu_{\xh} \gL(\xh + 1, x) / \gL(\xh, x).
$$
If~$X$ jumps before the lifetime of this exponential expires, then~$\Xh$ holds unless~$X$ jumps to $\xh + 1$, in which case $\Xh$ also jumps to $\xh + 1$.  But if the exponential expires first, then at that expiration time~$X$ holds but~$\Xh$ jumps to $\xh + 1$.

\begin{remark}
\label{R:stochmaxcont}
Among all continuous-time birth-and-death chains on $\{0, \dots, d\}$ started at~$0$ and with absorbing state~$d$ and given eigenvalues $\nu_0 \geq \nu_1 \geq \cdots \geq \nu_{d - 1} > \nu_d = 0$ for $- G$, the pure-birth ``spectral'' chain with birth rate $\nu_i$ at state~$i$ for $i = 0, \dots, d$ is stochastically maximal at every epoch~$t$.
\end{remark}

\section{Proof of \refT{T:TTS}(b)} \label{S:TTSproof}

In this section we prove~\refT{T:TTS}(b).  The proof is quite similar to that of \refT{T:BSdisc}, so we shall be brief.  Denote the eigenvalues of~$P$ by $\theta_0, \dots, \theta_d$.   Since we assume that~$P$ is ergodic, precisely one of these, say $\theta_d$, equals~$1$ and the rest have modulus smaller than~$1$.

Define the matrices~$Q_k$ as at~\eqref{Qkdef} [so that~\eqref{Qevo} holds] and define~$\gL$ as at~\eqref{gLdef}.  Then \refL{L:gL} holds for~$P$ of \refT{T:TTS}(b) by the same proof, as do properties (iv)--(v) of \refS{S:intro} and the 
relation~\eqref{twine}.  As we will show next, \refL{L:key} also holds in the present setting, and then the same argument as in the final paragraph of \refS{S:12proof} completes the proof of \refT{T:TTS}(b). 

\begin{lemma} \label{L:TTSkey}
Let~$P$ and~$X$, with fastest strong stationary time~$T$, be as in \refT{T:TTS}(b) and let~$\Ph$ be the bidiagonal matrix defined at~\eqref{phat}, where $\theta_0, \dots, \theta_{d - 1}$ are the non-unit eigenvalues of~$P$.  Then
$$
\PP(T \leq t) = \Ph^t(0, d), \quad t = 0, 1, \dots.
$$
\end{lemma}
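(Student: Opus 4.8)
The plan is to mimic the proof of \refL{L:key} step for step, changing only the one ingredient that was peculiar to the absorbing case. Write~$\pi$ for the stationary distribution of the ergodic kernel~$P$. As the excerpt already notes, the matrices~$Q_k$, the link~$\gL$, properties (iv)--(v) of \refS{S:intro}, and the intertwining~\eqref{twine} are all in force in the present setting, so I would start from $\gL P^t = \Ph^t \gL$ and read off its $(0,d)$ entry. Property~(iv) collapses the left side to $(\gL P^t)(0,d) = P^t(0,d)$, and property~(v) collapses the right side to $(\Ph^t\gL)(0,d) = \Ph^t(0,d)\,\gL(d,d)$; hence
$$
P^t(0,d) = \Ph^t(0,d)\,\gL(d,d), \qquad t = 0,1,2,\dots .
$$

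The ingredient that replaces the absorbing-case identity ``$P^t(0,d) = \PP(T\le t)$'' is the strong-stationary-duality formula for a fastest SST: since the time-reversal of~$P$ is stochastically monotone (equivalently, $P$ has the monotone likelihood ratio property) and~$X$ is started at the bottom state~$0$, the separation distance from~$0$ at time~$t$ is attained at the top state~$d$, so a fastest SST~$T$ obeys $\PP(T\le t) = P^t(0,d)/\pi(d)$ for all~$t$ --- this is exactly what the references \cite{DF}, \cite{TTS}, \cite{dualityc} cited in \refS{S:intro} provide. Combining this with the displayed relation gives $\PP(T\le t) = \Ph^t(0,d)\,\gL(d,d)/\pi(d)$ for every~$t$.

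It then only remains to pin down the constant $\gL(d,d)/\pi(d)$, and I would do this exactly as in \refL{L:key} by letting $t\to\infty$: the left side tends to~$1$, while $\Ph^t(0,d)\to1$ by the bordered-matrix computation already carried out in the proof of \refL{L:key}, so $\gL(d,d) = \pi(d)$ and therefore $\PP(T\le t) = \Ph^t(0,d)$ for every~$t$, which is the assertion. (Consistently, $\gL(d,d) = Q_d(0,d) = \frac{p_{0,1}\cdots p_{d-1,d}}{(1-\theta_0)\cdots(1-\theta_{d-1})}$ by \refR{R:nonsingular}, and the argument above identifies this quantity with~$\pi(d)$.) I expect the only genuine obstacle to be the duality step of the second paragraph: one must make sure the monotonicity hypothesis is applied in the right direction --- that the separation distance of the chain launched from~$0$ is realized at the \emph{top} state~$d$ rather than at~$0$ --- and that this yields an exact formula for $\PP(T\le t)$ rather than a one-sided bound. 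Granting that, everything else is the same elementary computation as for \refL{L:key}, and, as the excerpt remarks, the final paragraph of \refS{S:12proof} then delivers the probability generating function and hence completes \refT{T:TTS}(b).
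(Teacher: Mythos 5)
Your proposal is correct and follows essentially the same route as the paper's own proof of \refL{L:TTSkey}: read off the $(0,d)$ entry of the intertwining $\gL P^t = \Ph^t\gL$, replace the absorbing-chain identity by the separation formula $\PP(T\le t)=P^t(0,d)/\pi(d)$ (which is exactly what the stochastic-monotonicity hypothesis and the fastest-SST characterization provide), and let $t\to\infty$ to identify $\gL(d,d)=\pi(d)$. The ``genuine obstacle'' you flag is already handled by the assumption that the time-reversal is stochastically monotone and $X(0)=0$, so the minimum of $P^t(0,x)/\pi(x)$ is indeed attained at $x=d$, giving the exact equality rather than a bound.
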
 

\begin{proof}
As in the proof of \refL{L:key}, the key is the intertwining relation
\begin{equation}
\label{twine2} 
\gL P^t = \Ph^t \gL, \quad t = 0, 1, \dots,
\end{equation}
together with properties (iv)--(v) of \refS{S:intro}.  We consider the $(0, d)$ entries in~\eqref{twine2}.  

On the left we have
\begin{equation}
\label{lefttwine}
(\gL P^t)(0, d) = \sum_k \gL(0, k) P^t(k, d) = P^t(0, d) = \PP(T \leq t)\,\pi(d),
\end{equation}
where~$\pi$ is the stationary distribution for~$P$.  We explain how to obtain the final equality.  By the assumption that the time-reversal (call it~$\Pt$) of~$P$ is stochastically monotone, the ratio $P^t(0, x) / \pi(x) = \Pt^t(x, 0) / \pi(0)$ is minimized by $x = d$, with minimum value (by definition) $1 - s(t)$, where $s(t)$ is the so-called separation from stationarity for~$X$.  But, as is well known, a fastest strong stationary time~$T$ for~$X$ satisfies
$$
\PP(T \leq t) = 1 - s(t) \quad \mbox{for all~$t$}.
$$

On the right at~\eqref{twine2} we have
\begin{equation}
\label{righttwine}
(\Ph^t \gL)(0, d) = \sum_k \Ph^t(0, k) \gL(k, d) = \Ph^t(0, d) \gL(d, d).
\end{equation}
Now equate~\eqref{lefttwine} and~\eqref{righttwine} and pass to the limit as $t \to \infty$.  On the left, $\PP(T \leq t) \to 1$ (by ergodicity of~$X$); on the right, $\Ph^t(0, d) \to 1$ by the argument presented in the final paragraph of the proof of~\refL{L:key}.  So $\gL(d, d) = \pi(d)$, and now once again equating~\eqref{lefttwine} and~\eqref{righttwine} and canceling the factor $\pi(d) > 0$ gives the desired result.

\end{proof}

Stochastic constructions for \refT{T:TTS} have already been given for birth-and-death chains, in discrete time with nonnegative eigenvalues and in continuous time, in Section~4.1 (respectively, Section~5.1) in~\cite{FillKeil}.  The constructions extend verbatim to all skip-free chains with real (and, in discrete time, nonnegative) eigenvalues and nonnegative spectral polynomials.

\section{General chains} \label{S:general}

Our proof in \refS{S:12proof} of the central \refT{T:BSdisc} for the absorption time of a 
discrete-time skip-free chain rested on the construction of a matrix~$\gL$ having the properties (i)--(iii) listed in \refS{S:intro}.  The question we wish to address in this section are:
\begin{enumerate}
\item[(a)] Can \refT{T:BSdisc} be extended to general absorbing chains?
\item[(b)] Is the spectral-polynomials construction of $\gL$ inevitable?  That is, is the matrix $\gL$ uniquely determined by the properties (i)--(iii) listed in \refS{S:intro}?
\item[(c)] If the eigenvalues and spectral polynomials of a general absorbing chain are all nonnegative, can the stochastic construction for \refT{T:BSdisc} be extended? 
\end{enumerate}
As we shall see, the answer to each of these questions is {\bf yes}.  Question~(b) arises naturally because (switching to continuous time and limiting attention to birth-and-death chains for the moment, since that is the setting of~\cite{DM}) the proofs of \refT{T:BS} both of Diaconis and Miclo~\cite{DM} and of the present paper rely on construction of a link~$\gL$ such that $\gL G = \Gh \gL$, where $\Gh$ (the analogue of $\Ph$ in continuous time) is the pure-birth ``spectral'' generator described in our \refR{R:stochmaxcont}.  The two methods of construction are strikingly different, so it is interesting that the end-product $\gL$ is the same.

All three questions can be addressed in the following setting generalizing that of \refT{T:BSdisc}.  (We shall subsequently call this ``the general setting''.)  Consider a discrete-time Markov chain~$X$ with state space $\{0, \dots, d\}$ and arbitrary initial distribution $m_0$ (regarded as a row vector in later calculations) and transition matrix~$P$.  Assume that state~$d$ is absorbing and accessible from each other state.  
(There is no loss of generality in restricting the absorbing set to be a singleton.)  
Let $\theta_0, \dots, \theta_{d - 1}$ be the~$d$ non-unit eigenvalues of~$P$ (in fixed but arbitrary order---except where we require the nonnegativity of spectral polynomials, in which case we use nondecreasing order), let 
$\theta_d = 1$, and let~$\Ph$ be defined by~\eqref{phat}.  All square matrices we consider have rows and columns indexed by the state space $\{0, \dots, d\}$.  Let $Q_0, \dots, Q_d$ be the normalized spectral polynomials defined at~\eqref{Qkdef}.

Questions~(a) and~(b) are answered in \refS{SS:absgen} (see \refL{L:gLgen} and \refT{T:gen} respectively); question~(c), in \refS{SS:modlink}.

\subsection{Absorption times for general chains and the inevitability of spectral polynomials} 
\label{SS:absgen}

Our first result of this subsection demonstrates the inevitable use of spectral polynomials in the construction of~$\gL$, even in the present general setting, provided $\lambda_0 = m_0$.

\begl
\label{L:gLgen}
The unique matrix~$\gL$ {\rm (}with rows denoted by $\gl_0, \dots, \gl_d${\rm )} satisfying the two conditions
\begin{equation}
\label{desired}
m_0 = \gl_0 \qquad\mathrm{and}\qquad \gL P = \Ph \gL
\end{equation}
is given by
\begin{equation}
\label{forced}
\gl_i = m_0 Q_i, \quad i = 0, \dots, d.
\end{equation}
\enl

\begin{proof}
It is easy to check, as in the proof of \refL{L:gL}, that the choice~\eqref{forced} satisfies~\eqref{desired}.  Conversely, the $i$th row of $\gL P = \Ph \gL$ ($i = 0, \dots, d - 1$) requires
$$
\gl_i P = \theta_i \gl_i + (1 - \theta_i) \gl_{i + 1},\quad\mathrm{i.e.,}\quad \gl_{i + 1} = \gl_i \left[ (1 - \theta_i)^{-1} (P - \theta_i I) \right],
$$
and so (by induction) \eqref{desired} implies~\eqref{forced}.
\end{proof}

Our next result (\refT{T:gen}), the main result of this section, greatly generalizes \refT{T:BSdisc} by providing a tidy representation of the absorption-time distribution in the general setting.  For the purposes of this result, we use the conventions that an empty sum vanishes and that 
$\gL(-1, d) := 0$ and $\gL(d + 1, d) := 1$.  We also use the notation
\begin{equation}
\label{akdef}
a_k := \gL(k, d) - \gL(k - 1, d), \quad k = 0, \dots, d + 1. 
\end{equation}
Observe that these $a_k$'s sum to unity, and in the proof of \refT{T:gen} we show that $a_{d + 1}$ always vanishes.  Moreover, if the spectral polynomials happen to be nonnegative, it is easily verified that ${\bf a} := (a_0, \dots, a_{d + 1})$ has real and nonnegative entries and so is a probability mass function.  Finally, if the eigenvalues $\theta_i$ are all real and nonnegative, then for $k = 0, \dots, d$ we let $\Gc(\theta_0, \dots, \theta_{k - 1})$ denote the convolution of geometric distributions with respective success probabilities $1 - \theta_0, \dots, 1 - \theta_{k - 1}$; we utilize the natural conventions here that this distribution is concentrated at~$0$ when $k = 0$. 

\begt
\label{T:gen}
In the above general setting and notation, in particular with~$\Ph$ defined at~\eqref{phat}, $\gL$ defined as in \refL{L:gLgen} at~\eqref{forced}, and $a_k$ defined at~\eqref{akdef}, the absorption time~$T$ satisfies
\begin{equation}
\label{cdf}
\PP(T \leq t) = \sum_{k = 0}^d a_k \sum_{j = k}^d \Ph^t(0, j), \quad t = 0, 1, 2, \dots,
\end{equation}
with probability generating function
\begin{equation}
\label{pgfgen}
\EE\,u^T = \sum_{k = 0}^d a_k \prod_{j = 0}^{k - 1} \left[ \frac{(1 - \theta_j) u}{1 - \theta_j u} \right].
\end{equation}
In particular, if the eigenvalues $\theta_i$ are all nonnegative real numbers and the spectral polynomials in~$P$ are all nonnegative matrices, then~$T$ is distributed as the ${\bf a}$-mixture $\sum_{k = 0}^d a_k\,\Gc(\theta_0, \dots, \theta_{k - 1})$ of the convolution distributions $\Gc(\theta_0, \dots, \theta_{k - 1})$, $k = 0, \dots, d$.
\ent

\begin{proof}
\refL{L:gLgen} is the key.  Readily generalizing the proof of \refL{L:key} and then using summation by parts, one finds
$$
\PP(T \leq t) = \sum_{j = 0}^d \Ph^t(0, j) \gL(j, d) = \sum_{k = 0}^d a_k \sum_{j = k}^d 
\Ph^t(0, j).
$$
As shown in the proof of \refL{L:key}, $\Ph^t(0, j) \to \delta_{d,\,j}$ as $t \to \infty$; thus $1 = \sum_{k = 0}^d a_k$ and so $a_{d + 1} = 0$. 
Equation~\eqref{cdf} is all that is needed to establish~\eqref{pgfgen} when the eigenvalues of~$P$ are nonnegative real numbers; in general one can use~\eqref{inverse} (we omit the routine details).
\end{proof}

\begr
\label{R:gen}
(a)~If the chain is upward skip-free and $m_0 = \delta_0$, then $a_k \equiv \delta_{d, k}$ and \refT{T:BSdisc} is recovered.

(b)~The following special case is treated in detail by Miclo~\cite{Miclo}, albeit in somewhat different fashion.  If there exists~$\pi$ satisfying the detailed balance condition 
$\pi_i p_{i j} = \pi_j p_{j i}$ for all $i, j \in \{0, \dots, d - 1\}$, then by an argument like the one at the beginning of our \refS{S:constructs} the eigenvalues of~$P$ are nonnegative reals and the spectral polynomials are nonnegative.  Thus, by \refT{T:gen}, the absorption time is distributed as a mixture of convolutions of geometric distributions.  Miclo also shows that $a_d > 0$ when the states in $\{0, \dots, d - 1\}$ all communicate with one another. 

(c)~At least one case other than that of part~(b) of this remark is known for which the eigenvalues and spectral polynomials are all nonnegative.  If~$P$ is upper triangular, then of course the eigenvalues are nonnegative reals, and He and Zhang~\cite[Appendix~A]{HZ} prove that the spectral polynomials are nonnegative.

(d)~A result analogous to \refT{T:gen} holds for continuous-time chains; we omit the details.  See Miclo~\cite[Section~1]{Miclo} for a discussion of connections with the extensive literature on so-called ``phase-type'' distributions.

(e)~A result similar to \refT{T:gen} (with a similar proof) holds for the distribution of the fastest strong stationary time of a general ergodic chain~$X$ with general initial distribution~$m_0$ and stationary distribution~$\pi$, provided $\PP(X_t = x) / \pi(x)$ is minimized for every~$t$ by the choice $x = d$.  A sufficient condition for this is that the time-reversal of~$P$ is stochastically monotone (with respect to the natural linear order on $\{0, \dots, d\}$) and $m_0(x) / \pi(x)$ is decreasing in~$x$ (for example, $m_0 = \delta_0$).  The theorem then has the same form as \refT{T:gen}, except that now $a_k$ needs to be defined as 
$$
[\gL(k, d) - \gl(k - 1, d)] / \pi(d).
$$ 
\enr

\subsection{Stochastic construction via a modified link}
\label{SS:modlink}

We continue to study the general setting.  Whenever the eigenvalues and spectral polynomials are all nonnegative, the link~$\gL$ of~\eqref{forced} provides an intertwining of the semigroups 
$(P^t)_{t \geq 0}$ and $(\Ph^t)_{t \geq 0}$ [recall~\eqref{desired}], and again (as in \refS{S:constructs}) a chain~$\Xh$ with kernel~$\Ph$ can be constructed such that $\Xh_0 = 0$ and the ``sample-path intertwining'' relation~\eqref{history} holds.  However, unlike for skip-free chains, in the general case although we do have $\gL(d, d) = 1$ (this simply restates our earlier observation that $a_{d + 1} = 0$) there is no guarantee that the link~$\gL$ is lower triangular and thus all we can say with certainty is that the absorption times~$T$ for~$X$ and~$\Th$ (say) for~$\Xh$ satisfy $T \leq \Th$.  We will rectify this situation by modifying the link~$\gL$; this will not contradict the uniqueness of~$\gL$ proven in \refL{L:gLgen}, because we will also substitute a different ``dual'' kernel $\Pb$ for the spectral kernel $\Ph$.

To set the stage in the general setting without yet imposing any assumptions about nonnegativity of eigenvalues or spectral polynomials, define the matrix~$\gLb$ with rows $\glb_0, \dots, \glb_d$ by setting
$$
\glb_i :=
\begin{cases}
[1 - \gl_i(d)]^{-1} [\gl_i - \gl_i(d) \delta_d] & \mathrm{if\ }\gl_i(d) \neq 1 \\
\delta_d & \mathrm{if\ }\gl_i(d) = 1,
\end{cases}
\qquad (i = 0, \dots, d)
$$
where $\delta_d$ is the coordinate row vector $(0, 0, \dots, 0, 1)$, and note that the rows 
of ~$\gLb$, like those of~$\gL$, sum to unity.  Further, define $\Pb := B + R$ to be the sum of the bidiagonal upper triangular matrix~$B$ and the matrix~$R$ with rank at most~$1$ vanishing in all columns except for the last defined by
$$
b_{i j} := 
 \begin{cases}
 \theta_i   & \mbox{if $j = i$} \\
 \frac{1 - \gl_{i + 1}(d)}{1 - \gl_i(d)}(1 - \theta_i)   & \mbox{if $j = i + 1$} \\
 0   & \mbox{otherwise},
 \end{cases} 
$$
and [recalling the notation~\eqref{akdef}]
$$
r_{i d} :=  \left[ 1 - \frac{1 - \gl_{i + 1}(d)}{1 - \gl_i(d)} \right] (1 - \theta_i) 
= \frac{a_{i + 1}}{1 - \gl_i(d)} (1 - \theta_i)
$$
if $\gl_i(d) \neq 1$, and by 
$$
b_{i j} := \delta_{i j}, \qquad r_{i d} := 0
$$ 
if  $\gl_i(d) = 1$.  Observe that the rows of~$\Pb$ sum to unity.  Finally, let $\mb_0$ be the probability row vector
$$
\mb_0 := (1 - m_0(d), 0, \dots, 0, m_0(d)) = (1 - a_0, 0, \dots, 0, a_0).
$$
The following key fact follows by straightforward calculations from \refL{L:gLgen}.

\begl
\label{L:twine}
In the general setting and the above notation,
$$
m_0 = \mb_0 \gLb \qquad\mathrm{and}\qquad \gLb P = \Pb\,\gLb.
$$
\enl

One can use \refL{L:twine} in place of \refL{L:gLgen} to give another proof of \refT{T:gen}.  But much more is possible when the eigenvalues and spectral polynomials of~$P$ are all nonnegative, as we shall henceforth assume.  In that case we have the following conclusions (with all proofs entirely routine): 
\begin{itemize}
\item $\gLb$ and $\Pb$ are both stochastic, and so the semigroups $(P^t)_{t \geq 0}$ and $(\Pb^t)_{t \geq 0}$ are intertwined by the link~$\gLb$.
\item The set~$\Ab$ of absorbing states for a chain~$\Xb$ with kernel~$\Pb$ satisfies
$\Ab = \{\db, \dots, d\}$, where
$$
\db := \min\{i:\gl_i(d) = 1\} = \min\{i:a_i = 0\} - 1 \in \{0, \dots, d\},
$$
and $a_i = 0$ if and only if $i \geq \db + 1$.
\item For a $\Pb$-chain, from each state in $\{0, \dots, \db - 1\}$ the states~$\db$ and~$d$ are each accessible but none of the other states in~$\Ab$ is.
\item The construction of Section~2.4 of~\cite{DF} allows us to build, from~$X$ and independent randomness, a chain~$\Xb$ with initial distribution~$\mb$ and kernel~$\Pb$ such that
$$
\Lc(X_t\,|\,\Xb_0, \dots, \Xb_t) = \gLb(\Xb_t, \cdot) \quad \mbox{for all~$t$}.
$$
\item The time~$T$ to absorption in state~$d$ for~$X$ is the same (sample-pathwise) as the time to absorption (call it~$\Tb$) in~$\Ab$ (i.e.,\ in $\{\db, d\}$).
\item Let~$L$ be the largest value reached by~$\Xb$ prior to absorption, with the convention $L := -1$ if the initial state of~$\Xb$ is~$d$.  Then $\PP(L = k - 1) = a_k$ for all $k = 0, \dots, d$.  Further, conditionally given $L = k - 1$, the amounts of time it takes for~$\Xb$ to move up 
from~$0$ to~$1$, from~$1$ to~$2$, \dots, from $k - 2$ to $k - 1$, and from $k - 1$ to~$\Ab$ are independent geometric random variables with respective success probabilities $1 - \theta_0, 1 - \theta_1, \dots, 1 - \theta_{k - 2}, 1 - \theta_{k - 1}$. 
\end{itemize}

Thus in the case of nonnegative eigenvalues and spectral polynomials we have enriched the conclusion of \refT{T:gen} by means of a stochastic construction identifying (i)~a random variable (namely, $L + 1$) having probability mass function $(a_k)$; and, conditionally given 
$L + 1 = k$, (ii)~individual geometric random variables whose distributions appear in the convolution $\Gc(\theta_0, \dots, \theta_{k - 1})$.

\begr
Recall \refR{R:gen}.  If the conditions described there of detailed balance and complete communication within $\{0, \dots, d - 1\}$ both hold, then $\db = d$ and thus $\Ab = \{d\}$ is a singleton.
\enr
\bigskip

{\bf Acknowledgments.\ }\refS{S:general} was added after the author read the interesting 
paper~\cite{Miclo} by Laurent Miclo; we thank him for providing a preprint, which the results of \refS{S:general} generalize.  We also thank Mark Brown for inspirational discussions.


\begin{thebibliography}{99}
\def\nobibitem#1\par{}

\bibitem{BS}
Brown, M.\ and Shao, Y.~S.\ \ 
Identifying coefficients in the spectral representation for first passage time distributions.
\emph{Probab.\ Eng.\ Inform.\ Sci.} {\bf 1} (1987), 69--74.

\bibitem{DF}
Diaconis, P.\ and Fill, J.~A.\ \ 
Strong stationary times via a new form of duality.
\emph{Ann.\ Probab.} {\bf 18} (1990), 1483--1522.

\bibitem{DM}
Diaconis, P.\ and Miclo, L.\ \ 
On times to quasi-stationarity for birth and death processes.
\emph{J.\ Theoret.\ Probab.} (2009), to appear.

\bibitem{TTS}
Fill, J.~A.\ \ 
Time to stationarity for a continuous-time Markov chain.
\emph{Probab.\ Eng.\ Info.\ Scis.} {\bf 5} (1991), 61--76.

\bibitem{dualityc}
Fill, J.~A.\ \ 
Strong stationary duality for continuous-time Markov chains.  Part~I:\ Theory.
\emph{J.\ Theoret.\ Probab.} {\bf 5} (1992), 45--70.

\bibitem{FillKeil}
Fill, J.~A.\ \ 
The passage time distribution for a birth-and-death chain:\ Strong stationary duality gives a first stochastic proof.
\emph{J.\ Theoret.\ Probab.} (2009), to appear.

\bibitem{HZ}
He, Q-M.\ and Zhang, H.\ \ 
Spectral polynomial algorithms for computing bi-diagonal representations for phase type distributions and matrix-exponential distributions.
\emph{Stoch.\ Models} {\bf 22} (2006), 289--317.

\bibitem{HJ}
Horn, R.~A.\ and Johnson, C.~R.\ \ 
\emph{Matrix Analysis.}
Cambridge University Press, 1985.

\bibitem{KM}
Karlin, S.\ and McGregor, J.\ \ 
Coincidence properties of birth and death processes.
\emph{Pacific J.\ Math.} {\bf 9} (1959), 1109--1140.

\bibitem{MW}
Micchelli, C.~A.\ and Willoughby, R.~A.\ \ 
On functions which preserve the class of Stieltjes matrices.
\emph{Lin.\ Alg.\ Appl.} {\bf 23} (1979), 141--156.

\bibitem{Miclo}
Miclo, L.\ \ 
On absorbtion [sic] times and Dirichlet eigenvalues.
Preprint, 2008.

\end{thebibliography}
\end{document}